\numberwithin{equation}{section}
\theoremstyle{plain}
\newtheorem{theorem}{Theorem}[section]
\newtheorem{lemma}[theorem]{Lemma}
\theoremstyle{definition}
\newtheorem{remark}[theorem]{Remark}
\newcommand{\R}{{\mathbb R}}
\newcommand{\Z}{{\mathbb Z}}
\newcommand{\C}{{\mathbb C}}
\newcommand{\N}{{\mathbb N}}
\newcommand{\F}{{\mathcal F}}
\newcommand{\supp}{{\operatorname {supp}\,}}
\newcommand{\cl}{{\operatorname {cl}\,}}
\newcommand{\End}{{\operatorname {End}\,}}
\newcommand{\tr}{{\operatorname {tr}\,}}
\renewcommand{\S}{{\mathcal S}}
\begin{document}
\title[Local spectral radius formulas on compact Lie groups]
{{Local spectral radius formulas on compact Lie groups}}

\author{Nils Byrial Andersen}
\address{Nils Byrial Andersen\\
Alssundgymnasiet S\o nderborg\\
Grundtvigs All\'e 86\\
6400 S\o nderborg\\
Denmark}
\email{nb@ags.dk}

\author{Marcel de Jeu}
\address{Mathematical Institute,
Leiden University,
P.O. Box 9512,
2300 RA Leiden,
The Netherlands}
\email{mdejeu@math.leidenuniv.nl}
\date{23 May, 2008}
\subjclass[2000]{Primary 22E30; Secondary 47A11}
\keywords{compact Lie group, universal enveloping algebra, local spectrum, local spectral radius, local spectral radius formula}

\begin{abstract}
We determine the local spectrum of a central element of the complexified universal enveloping algebra of a compact connected Lie group at a smooth function as an element of $L^p(G)$. Based on this result we establish a corresponding local spectral radius formula.
\end{abstract}
\maketitle

\section{Introduction and statement of result}\label{sec:intro}

Let $f$ be a Schwartz function on $\R^d$ and let $P(\partial)$ be a constant
coefficient differential operator with complex coefficients. If $1\leq p\leq\infty$, then
it is known that
\begin{equation}\label{eq:limSchwartzintro}
\lim _{n\to \infty}\| P(\partial )^n f\| _p ^{1/n} =\sup \left\{ |z| : z\in \{ P(i\lambda): \lambda\in\supp \F f\}^{\cl}\right\} ,
\end{equation}
in the extended positive real numbers, were $\F f$ is the Fourier transform of $f$ and $A^{\cl}$ denotes the closure of a subset $A$ of the complex plane.
This result was first established by Tuan for real coefficients, see \cite[Theorem~2]{Tu}, and later by the authors for the general case, see \cite[Theorem~2.5]{AdJ}.

In \cite{AdJ} we raised the question whether analogues of \eqref{eq:limSchwartzintro} hold for other
Lie groups, with $P(\partial)$ replaced by an element of the center of the universal enveloping algebra,
and whether such results could be interpreted as a local spectral radius formula, analogous to the case $p=1$
on $\R^d$, see \cite[Corollary~5.4]{AdJ}. In order to explain this interpretation we recall a few relevant definitions
from local spectral theory, see \cite{ErdWang}, \cite{LaurNeu} and \cite{Vasilescu}.

Let $X$ be a Banach space, and $T: {\mathcal D}_T \to X$ a closed operator with domain ${\mathcal D}_T$.
Then $z_0\in\C$ is said to be in the local resolvent set of $x\in X$,
denoted by $\rho_T(x)$, if there is an open neighborhood $U$ of $z_0$
in $\C$, and an analytic function $\phi: U\to {\mathcal D}_T$, sending $z$ to
$\phi_z$, such that
\begin{equation}\label{eq:localresolvent}
(T-z)\phi_z=x\qquad(z\in U).
\end{equation}
The local spectrum $\sigma_T(x)$ of $T$ at $x$ is the
complement of $\rho_T(x)$ in $\C$.

The operator $T$ is said to have the
single-valued extension property (SVEP) if, for every
non-empty open subset $U\subset\C$, the only analytic solution $\phi:U\to X$ of the
equation $(T-z)\phi_z=0$ $(z\in U)$ is the zero solution. This is equivalent to requiring that the analytic
local resolvent function $\phi$ in \eqref{eq:localresolvent} is determined uniquely, so that we can speak of "the" analytic
local resolvent function on $\rho_T(x)$.

If ${\mathcal D}_T =X$ and $T$ has SVEP, then, by \cite[Proposition~3.3.13]{LaurNeu},
the local spectral radius formula
\begin{equation}\label{eq:localformulawithlimsup}
\limsup_{n\to\infty}\Vert T^n x\Vert^{1/n}=\max\left\{|z| : z\in\sigma_T(x)\right\}
\end{equation}
holds for all $x\in X$. If ${\mathcal D}_T =X$, but $T$ does not necessarily have SVEP, then by
\cite[Proposition~3.3.14]{LaurNeu} the set of $x\in X$ for
which \eqref{eq:localformulawithlimsup} holds is still always of the
second category in $X$.
If ${\mathcal D}_T =X$ and $T$ has Bishop's property ($\beta$)
(see \cite[Definition~1.2.5]{LaurNeu}; it is immediate that
property ($\beta$) implies SVEP), then, by
\cite[Proposition~3.3.17]{LaurNeu},
\begin{equation}\label{eq:localformulawithlim}
\lim_{n\to\infty}\Vert T^n x\Vert^{1/n}=\max\left\{|z| : z\in\sigma_T(x)\right\},
\end{equation}
for all $x\in X$.

Thus there exist general results concerning the validity of local spectral radius formulas, such as \eqref{eq:localformulawithlimsup} and \eqref{eq:localformulawithlim}, for bounded operators. We are not aware of such a priori guarantees for unbounded operators, and it is one of the main results in \cite{AdJ} that, for $p=1$, the equality in \eqref{eq:limSchwartzintro} can, in fact, be interpreted as a local spectral radius formula for a closed unbounded operator.\footnote{For other values of $p$ the problem is still open, although it is conjectured in \cite{AdJ} that the interpretation then holds as well.}

To be precise, let $T_{P(\partial),1}:C_c^\infty(\R^d)\to L^1(\R^d)$
be defined canonically as $T_{P(\partial),1} f=P(\partial)f$, for $f\in C_c^\infty(\R^d)$.
It is then easily seen, cf.~\cite[Section~4.2]{Schechter}, that $T_{P(\partial),1}$ has a closed
extension $\widetilde T_{P(\partial),1}$ on $L^1(\R^d)$, with domain ${\mathcal D}_{\widetilde T_{P(\partial),1}}$ consisting of those $f\in L^1(\R^d)$ such that $P(\partial)f$ is in $L^1(\R^d)$, and defined as $\widetilde T_{P(\partial),1} f=P(\partial)f$ for $f\in {\mathcal D}_{\widetilde T_{P(\partial),1}}$.

Then \cite[Corollary~5.4]{AdJ} reads as follows:

\begin{theorem}\label{cor:interpretation}
The closed operator $\widetilde T_{P(\partial),1}$ on $L^1(\R^d)$ has SVEP.
Furthermore, if $f$ is a Schwartz function on $\R^d$, then
\[
\sigma_{\widetilde T_{P(\partial),1}}(f)={\left\{P(i\lambda) : \lambda\in\supp \F f\right\}}^\cl.
\]
Combined with \eqref{eq:limSchwartzintro} this implies that the local spectral radius formula
\begin{equation}\label{eq:equivalentform}
\lim_{n\to \infty}\| {\widetilde T}_{P(\partial),1}^n f\| _1 ^{1/n} =\sup\left\{|z| : z\in\sigma_{\widetilde T_{P(\partial),1}}(f)\right\}
\end{equation}
holds in the extended positive real numbers.
\end{theorem}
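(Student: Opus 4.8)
The plan is to transfer everything to the Fourier transform side, where $\widetilde T := \widetilde T_{P(\partial),1}$ is essentially multiplication by the polynomial $\lambda\mapsto P(i\lambda)$, so that the statement reduces to elementary facts about this multiplication together with the continuity of $\F g$ for $g\in L^1(\R^d)$. For the SVEP, let $U\subseteq\C$ be open and connected (it is enough to treat each connected component), and suppose $\phi\colon U\to L^1(\R^d)$ is analytic with $(\widetilde T - z)\phi_z = 0$. Applying $\F$ yields $(P(i\lambda) - z)\,\F\phi_z(\lambda) = 0$, an identity between continuous functions of $\lambda$; since a nonzero polynomial vanishes on a set with empty interior, $\F\phi_z$ vanishes on a dense open set, hence identically, whenever $\lambda\mapsto P(i\lambda) - z$ is not the zero polynomial, i.e.\ for every $z\in U$ with at most one exception. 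Thus $\phi_z = 0$ off an isolated point of $U$, and continuity forces $\phi\equiv 0$.

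Next I would establish the two inclusions in the formula for $\sigma_{\widetilde T}(f)$. For $\sigma_{\widetilde T}(f)\subseteq\{P(i\lambda):\lambda\in\supp\F f\}^{\cl}$, fix $z_0$ outside the right-hand side; as that set is closed, there is $r>0$ with $|P(i\lambda) - z_0|\geq r$ for all $\lambda\in\supp\F f$, and hence $|P(i\lambda) - z|\geq r/2$ for all such $\lambda$ and all $z$ in the disc $U := \{z : |z - z_0| < r/2\}$. I then define the candidate local resolvent by $\phi_z := \F^{-1}\bigl(\F f\,/\,(P(i\,\cdot\,) - z)\bigr)$, the quotient taken to be $0$ wherever $\F f$ vanishes. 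The crucial claim is that $\phi_z\in\S(\R^d)\subseteq L^1(\R^d)$: the quotient is smooth on all of $\R^d$ --- on $\supp\F f$ because $P(i\,\cdot\,) - z$ is smooth and nonvanishing there, and off $\supp\F f$ because $\F f$ vanishes on a neighborhood --- and all of its derivatives are rapidly decreasing, because each $\partial^\beta\F f$ is supported in $\supp\F f$, on which $1/(P(i\,\cdot\,) - z)$ and its derivatives grow at most polynomially (the denominator being bounded below by $r/2$). Granting this, one verifies $\phi_z\in{\mathcal D}_{\widetilde T}$ with $(\widetilde T - z)\phi_z = f$ --- on the Fourier side simply $(P(i\lambda) - z)\cdot\F f(\lambda)/(P(i\lambda) - z) = \F f(\lambda)$ --- and that $z\mapsto\phi_z$ is analytic from $U$ into $L^1(\R^d)$, using $\partial_z\bigl(1/(P(i\,\cdot\,)-z)\bigr) = 1/(P(i\,\cdot\,)-z)^2$ and the same polynomial growth bounds to control the difference quotients in $\S(\R^d)$, hence in $L^1(\R^d)$. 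This gives $z_0\in\rho_{\widetilde T}(f)$.

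For the opposite inclusion, take $z_0\in\rho_{\widetilde T}(f)$ with local resolvent function $\phi$ on a neighborhood $U$ of $z_0$; applying $\F$ to $(\widetilde T - z)\phi_z = f$ gives $(P(i\lambda) - z)\,\F\phi_z(\lambda) = \F f(\lambda)$ for each $z\in U$, again an identity of continuous functions of $\lambda$. For any $\lambda$ with $P(i\lambda)\in U$, specializing $z := P(i\lambda)$ forces $\F f(\lambda) = 0$; consequently $\{\F f\neq 0\}$, and therefore its closure $\supp\F f$, lies in the closed set $\{\lambda : P(i\lambda)\notin U\}$. Hence $\{P(i\lambda):\lambda\in\supp\F f\}$ misses $U$, so its closure does too, and in particular $z_0\notin\{P(i\lambda):\lambda\in\supp\F f\}^{\cl}$. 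Finally, \eqref{eq:equivalentform} follows by feeding the identity for $\sigma_{\widetilde T}(f)$ just proved into \eqref{eq:limSchwartzintro} with $p=1$, after observing that a Schwartz function lies in the domain of every power of $\widetilde T$, with $\widetilde T^n f = P(\partial)^n f$ there.

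The step I expect to be the main obstacle is the verification in the second paragraph that the candidate $\phi_z$ really lands in $L^1(\R^d)$ and depends analytically on $z$: since $\supp\F f$ need not be compact, one cannot argue by a crude compactness or uniform-boundedness device, but must genuinely play the Schwartz decay of $\F f$ off against the merely polynomially controlled growth of the multiplier $1/(P(i\,\cdot\,) - z)$ on $\supp\F f$, and likewise for the derivatives entering the difference quotients.
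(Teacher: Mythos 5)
Your proposal is correct, and it is essentially the paper's approach: the paper itself only cites \cite[Corollary~5.4]{AdJ} for this theorem, but your three steps (SVEP via Fourier injectivity with a single exceptional point handled by continuity, the forward inclusion by specializing $z=P(i\lambda)$ in the local resolvent equation, and the reverse inclusion by constructing the local resolvent as $\F^{-1}$ of $\F f/(P(i\,\cdot\,)-z)$ using the $\varepsilon$-separation and the Schwartz/rapid-decay calculus) are exactly the Euclidean counterparts of the paper's Lemmas~\ref{lem:SVEP} and \ref{lem:localspectrum} and the concluding appeal to \eqref{eq:limSchwartzintro}. The point you flag as the main obstacle --- that the divided Fourier transform is again Schwartz and depends analytically on $z$ --- is indeed the crux, and your Leibniz-plus-lower-bound argument on $\supp\F f$ handles it correctly.
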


This paper is concerned with the analogue of Theorem~\ref{cor:interpretation}, for $1\le p\le\infty$,
on a connected compact Lie group $G$, with Lie algebra $\mathfrak g$. We will replace $P(\partial)$ with an element
$D$ in the center of the complexified universal enveloping algebra $U(\mathfrak g)_\C$, viewed as the algebra of left-invariant differential operators on $G$. In order to state the results, we need some preliminaries
which will also be used in the proofs in the next section.

We let ${}^\dag:U(\mathfrak g)_\C\to U(\mathfrak g)_\C$ be the complex linear anti-homomorphism of
$U(\mathfrak g)_\C$ such that $X^\dag = -X$, for $X\in\mathfrak g$. If $S$ is a distribution on $G$, and
$D\in U(\mathfrak g)_\C$, then $DS$ is the distribution defined by
\[
\langle DS, \psi \rangle = \langle S, D^\dag\psi \rangle  \qquad (\psi \in C^\infty (G)).
\]
Since $G$ is unimodular, this is compatible with the action of $G$ on smooth functions.

For $1\le p\le\infty$, and $D\in U(\mathfrak g)_\C$, we define the operator $T_{D,p}: C^\infty(G)\to L^p(G)$
canonically by $T_{D,p} f= D f$, for $f\in C^\infty(G)$. Then, as in \cite[Section~4.2]{Schechter},
$T_{D,p}$ has a closed extension $\widetilde T_{D,p}$ on $L^p(G)$, with domain
${\mathcal D}_{\widetilde T_{D,p}}$ equal to those $f\in L^p(G)$
such that the distribution $Df$ is in $L^p(G)$, and defined as $\widetilde T_{D,p} f=Df$,
for $f\in {\mathcal D}_{\widetilde T_{D,p}}$.

Choose and fix representatives $(\pi,H_\pi)$ for the unitary dual $\widehat{G}$ of $G$.
If $\pi\in \widehat{G}$ (we will allow ourselves such abuse of notation), we let $\bar{\pi}$ denote its contragredient representation, and
$\chi _\pi: Z(U(\mathfrak g)_\C) \to \C$ its infinitesimal character, defined on the center $Z(U(\mathfrak g)_\C)$ of $U(\mathfrak g)_\C$.

Let $dg$ be the normalized Haar measure on $G$. If $f\in L^1 (G)$, and $\pi\in \widehat{G}$, define the Fourier transform $\F f (\pi)$ of $f$ at $\pi$ as
\begin{equation}\label{eq:fouriertransformdefinition}
\F f (\pi)= \int _G f(g) \pi (g) dg\in \End _\C (H_{\pi}).
\end{equation}
Note that $L ^p (G) \subset L ^1 (G)$, for $1\le p\le\infty$, so that the Fourier transform is defined on
$L ^p (G)$, for all $1\le p\le\infty$.

Then we have the following result:
\begin{theorem}\label{thm:liegroupspectralradiusformula}
Let $G$ be a connected compact Lie group and $D\in Z(U(\mathfrak g)_\C)$.
Let $1\le p\le\infty$.
Then the closed extension $\widetilde T_{D,p}$  of $T_{D,p}$ has SVEP.
If $f\in C^\infty (G)$, then the local spectrum of $\widetilde T_{D,p}$ at $f \in {\mathcal D}_{\widetilde T_{D,p}}$ is
given by
\begin{equation}\label{eq:liegrouplocalspectrum}
\sigma_{\widetilde T_{D,p}}(f)={\left\{\chi_{\bar{\pi}}(D) : \pi\in\supp \F f\right\}}^\cl,
\end{equation}
and the local spectral radius formula
\begin{equation}\label{eq:liegroupspectralradiusformula}
\lim_{n\to \infty}\|\widetilde T_{D,p}^n f\| _p ^{1/n} =\sup\left\{|z| : z\in\sigma_{\widetilde T_{D,p}}(f)\right\}
\end{equation}
holds in the extended positive real numbers.
\end{theorem}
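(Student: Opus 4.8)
The plan is to transfer the whole problem to the Fourier side of $G$, where the hypothesis $D\in Z(U(\mathfrak g)_\C)$ becomes the statement that $D$ acts by a scalar on each isotypic subspace. Write $d_\pi=\dim_\C H_\pi$, let $\|\cdot\|$ be the operator norm on $\End_\C(H_\pi)$, and let $d\pi\colon U(\mathfrak g)_\C\to\End_\C(H_\pi)$ be the algebra homomorphism extending the differential of $\pi$. The computational core is the intertwining identity
\[
\F(Df)(\pi)=\chi_{\bar\pi}(D)\,\F f(\pi)\qquad(\pi\in\widehat G),
\]
valid for all $f\in{\mathcal D}_{\widetilde T_{D,p}}$, where $Df$ is the distributional derivative. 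To establish it, I would first observe that left-invariance of $D$ gives $(D^\dag\pi(\cdot))(g)=\pi(g)\,d\pi(D^\dag)$ as matrix-valued functions on $G$; substituting this into $\langle Df,\psi\rangle=\langle f,D^\dag\psi\rangle$ with $\psi$ running over the matrix coefficients of $\pi$ yields $\F(Df)(\pi)=\F f(\pi)\,d\pi(D^\dag)$. Since ${}^\dag$ is an anti-automorphism, $D^\dag$ is again central, whence $d\pi(D^\dag)=\chi_\pi(D^\dag)\,I$ by Schur's lemma; and since $\bar\pi(g)={}^{t}\pi(g)^{-1}$ and $X^\dag=-X$ one has $d\bar\pi(E)={}^{t}d\pi(E^\dag)$ first on $\mathfrak g$ and then on all of $U(\mathfrak g)_\C$, which forces $\chi_\pi(D^\dag)=\chi_{\bar\pi}(D)$. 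Iterating, $\F(D^nf)(\pi)=\chi_{\bar\pi}(D)^n\,\F f(\pi)$ for every $n$.

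From this, SVEP is immediate: if $\phi\colon U\to L^p(G)$ is analytic and $(\widetilde T_{D,p}-z)\phi_z=0$ on $U$, applying the bounded linear map $\F(\cdot)(\pi)$ (defined on $L^1(G)\supset L^p(G)$) gives $(\chi_{\bar\pi}(D)-z)\F\phi_z(\pi)=0$, so the analytic function $z\mapsto\F\phi_z(\pi)$ vanishes off the single point $\chi_{\bar\pi}(D)$ and hence identically on $U$; as $\pi$ is arbitrary, injectivity of the Fourier transform forces $\phi\equiv0$. To compute the local spectrum I would prove two inclusions. First, if $z_0\notin\overline{\{\chi_{\bar\pi}(D):\pi\in\supp\F f\}}$, choose $\delta>0$ with $|\chi_{\bar\pi}(D)-z_0|\ge\delta$ for all $\pi\in\supp\F f$, and for $|z-z_0|<\delta$ let $\phi_z$ be the function whose Fourier transform is $(\chi_{\bar\pi}(D)-z)^{-1}\F f(\pi)$ on $\supp\F f$ and $0$ elsewhere: because $f$ is smooth its Fourier coefficients decay rapidly while $(\chi_{\bar\pi}(D)-z)^{-1}$ stays bounded on $\supp\F f$, so $\phi_z\in C^\infty(G)\subset{\mathcal D}_{\widetilde T_{D,p}}$; expanding $(\chi_{\bar\pi}(D)-z)^{-1}$ in powers of $z-z_0$ and estimating in $L^p(G)$ shows $z\mapsto\phi_z$ is analytic; and the intertwining identity gives $(\widetilde T_{D,p}-z)\phi_z=f$, so $z_0\in\rho_{\widetilde T_{D,p}}(f)$. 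Conversely, if $z_0\in\rho_{\widetilde T_{D,p}}(f)$ with local resolvent $\phi$ on a neighbourhood $U$, then $(\chi_{\bar\pi}(D)-z)\F\phi_z(\pi)=\F f(\pi)$ for all $z\in U$ and all $\pi$; evaluating at $z=\chi_{\bar\pi}(D)$ whenever $\chi_{\bar\pi}(D)\in U$ forces $\F f(\pi)=0$ there, i.e.\ $\{\chi_{\bar\pi}(D):\pi\in\supp\F f\}\cap U=\emptyset$, so $z_0\notin\overline{\{\chi_{\bar\pi}(D):\pi\in\supp\F f\}}$. This proves \eqref{eq:liegrouplocalspectrum}.

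For the local spectral radius formula, put $R=\sup\{|z|:z\in\sigma_{\widetilde T_{D,p}}(f)\}$, which by \eqref{eq:liegrouplocalspectrum} and continuity of $|\cdot|$ equals $\sup\{|\chi_{\bar\pi}(D)|:\pi\in\supp\F f\}$, and note that $\widetilde T_{D,p}^nf=D^nf\in C^\infty(G)$ since $C^\infty(G)$ is a $D$-stable subspace of ${\mathcal D}_{\widetilde T_{D,p}}$. For the lower bound I would use the elementary inequalities $\|\F h(\pi)\|\le\|h\|_1\le\|h\|_p$ (the second because the Haar measure is normalized): with $\F(D^nf)(\pi)=\chi_{\bar\pi}(D)^n\F f(\pi)$ they give $|\chi_{\bar\pi}(D)|^n\|\F f(\pi)\|\le\|D^nf\|_p$ for every $\pi\in\supp\F f$, hence $\liminf_n\|D^nf\|_p^{1/n}\ge|\chi_{\bar\pi}(D)|$ for each such $\pi$ (using $\|\F f(\pi)\|^{1/n}\to1$), and therefore $\liminf_n\|D^nf\|_p^{1/n}\ge R$. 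For the upper bound, which is only needed when $R<\infty$, expanding $D^nf$ in its Fourier series and using $|\chi_{\bar\pi}(D)|\le R$ on $\supp\F f$ together with $\|h\|_p\le\|h\|_\infty$ gives
\[
\|D^nf\|_p\le\|D^nf\|_\infty\le\sum_{\pi}d_\pi^{2}\,|\chi_{\bar\pi}(D)|^{n}\,\|\F f(\pi)\|\le R^{n}\sum_{\pi\in\supp\F f}d_\pi^{2}\,\|\F f(\pi)\|,
\]
the last series being finite since $f$ is smooth while $d_\pi$ and the number of inequivalent $\pi$ with Casimir eigenvalue below a given bound grow only polynomially; taking $n$-th roots gives $\limsup_n\|D^nf\|_p^{1/n}\le R$ and hence \eqref{eq:liegroupspectralradiusformula}. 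The spectral-theoretic side of this is comparatively soft; I expect the real obstacle to be harmonic-analytic, namely the equivalence between smoothness on $G$ and rapid decay of Fourier coefficients together with the polynomial control on the $d_\pi$ and on the eigenvalue-counting function of the Casimir operator (a Weyl-type estimate, as in Sugiura's work). These are precisely what make the candidate local resolvent $\phi_z$ genuinely smooth with $z\mapsto\phi_z$ analytic, and what let the Fourier series of $D^nf$ be summed to the $R^n$ bound uniformly in $n$; the chain $\|\cdot\|_1\le\|\cdot\|_p\le\|\cdot\|_\infty$ on the compact group $G$ then disposes of all $1\le p\le\infty$ simultaneously.
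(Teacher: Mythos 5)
Your proposal is correct and follows essentially the same route as the paper: the intertwining identity $\F(Df)(\pi)=\chi_{\bar\pi}(D)\F f(\pi)$, SVEP via vanishing of the analytic Fourier coefficient functions, the two inclusions for the local spectrum with the local resolvent built by Fourier inversion of $(\chi_{\bar\pi}(D)-z)^{-1}\F f(\pi)$, and the radius formula from the trivial $L^1$ lower bound together with the absolutely convergent Fourier series upper bound. The only difference is presentational: where you sketch the Weyl-type eigenvalue counting and rapid decay directly, the paper packages exactly this harmonic-analytic input as Sugiura's theorem that $\F$ is a topological isomorphism of $C^\infty(G)$ onto $\S(\widehat G)$.
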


\begin{remark}
Obviously, Theorem~\ref{thm:liegroupspectralradiusformula} is an analogue of Theorem~\ref{cor:interpretation}.
It would be premature to state a conjecture, but in view of these two results, the material presented in \cite{AdJ} and the proofs below, it is tempting to consider the possibility that Theorem~\ref{thm:liegroupspectralradiusformula} and Theorem~\ref{cor:interpretation} have a common generalization for Schwartz functions on connected reductive groups -- or perhaps even symmetric spaces -- including appropriate analogues of \eqref{eq:liegrouplocalspectrum} and \eqref{eq:liegroupspectralradiusformula}.
\end{remark}

\section{Proofs}\label{sec:proofs}

We now turn to the proof of Theorem~\ref{thm:liegroupspectralradiusformula}, which will occupy the remainder
of the paper. It is based on results in \cite{Sug} on the Fourier transform of smooth functions
on a connected compact Lie group, which we will now recall.

Let $G$ be a connected compact Lie group, with Lie algebra $\mathfrak g$. Choose and fix a maximal torus $T$
with Lie algebra $\mathfrak t$. Then $\mathfrak g= \mathfrak z \oplus [\mathfrak g, \mathfrak g]$, where $\mathfrak z$
is the center of $\mathfrak g$ and where $[\mathfrak g, \mathfrak g]$ is either zero or semisimple.
In the latter case, ${(\mathfrak t\cap[\mathfrak g, \mathfrak g])}_\C$ is a Cartan subalgebra of the semisimple
complex Lie algebra ${[\mathfrak g, \mathfrak g]}_\C$ and we let $\Delta$ be the roots of ${[\mathfrak g, \mathfrak g]}_\C$
relative to ${(\mathfrak t\cap[\mathfrak g, \mathfrak g])}_\C$.
Fix a choice of positive roots, and hence a set of dominant weights on ${(\mathfrak t\cap[\mathfrak g, \mathfrak g])}_\C$.

Let $\Gamma _G = \{X\in\mathfrak t : \exp X =1\}$, so that $T \cong \mathfrak t/\Gamma _G$. Then, according to
\cite[Theorem~4.6.12]{Wal}, $\widehat{G}$ is in bijective correspondence with the set $\Lambda_{\widehat{G}}$ of complex linear forms
$\lambda$ on $\mathfrak t_\C$ such that
\begin{enumerate}
\item[(1)] $\lambda (\Gamma _G) \subset 2 \pi i \Z$.
\item[(2)] $\lambda _{|{(\mathfrak t\cap[\mathfrak g, \mathfrak g])}_\C}$ is dominant integral.
\end{enumerate}
The correspondence is via highest weight modules for ${{(\mathfrak t\cap[\mathfrak g, \mathfrak g])}_\C}$, but its precise form is not relevant for the present paper. Note that if $[\mathfrak g, \mathfrak g]= 0$, i.e., if $G=T$, then the above result is still valid if one takes condition (2) to be vacuously fulfilled. As a notation in the sequel, we will let  $\lambda \in\Lambda_{\widehat{G}}$ correspond to $(\pi _\lambda, H_\lambda) \in \widehat{G}$.

The space $C^\infty (G)$ is a Fr\'echet space when equipped with the seminorms
$p_D(f) = \| Df \|_\infty,\,D\in U(\mathfrak g)_\C$, for $f\in C^\infty (G)$.
As is stated below, its counterpart on the Fourier series side is the space $\S (\widehat{G})$ of rapidly decreasing operator valued functions
on $\widehat{G}$, which we now define. Fix a norm on the dual of $\mathfrak t _\C$.  Then $\S (\widehat{G})$ is the space of functions $\phi:\Lambda_{\widehat{G}}\to \bigcup_{\pi \in {\widehat{G}}}\End _\C (H_\pi)$, such that
\begin{enumerate}
\item[(a)] $\phi (\lambda) \in \End _\C (H_{\pi_\lambda})$ for all $\lambda \in \Lambda_{\widehat{G}}$, and
\item[(b)] $\sup _{\lambda \in \Lambda_{\widehat{G}}} |\lambda |^s \| \phi(\lambda)\| < \infty$, for all $s\in \N \cup \{0\}$.
\end{enumerate}
Here, and in the sequel, the norm of an element of $\End _\C (H_{\pi})$ will always be its Hilbert--Schmidt norm. The space $\S (\widehat{G})$ becomes a Fr\'echet space when equipped with the seminorms $q_s(\phi) =\sup _{\lambda \in \Lambda_{\widehat{G}}} |\lambda |^s \| \phi(\lambda)\|$, for $\phi\in\S(\widehat{G})$ and $s\in \N \cup \{0\}$.

Let $f\in L^1 (G)$. In view of the description of $\widehat G$ above, the Fourier transform $\F f$ of $f$, as defined in \eqref{eq:fouriertransformdefinition}, can be regarded as an operator valued function on $\Lambda_{\widehat{G}}$ which satisfies (a).
With this in mind we can now give the following alternative formulation of some of the results from \cite{Sug}:
\begin{theorem}\label{thm:sugiura}
If $f\in C^\infty (G)$, then $\F f \in\S (\widehat{G})$. Moreover, the map $\F :C^\infty (G)\to \S (\widehat{G})$
is a topological isomorphism of $C^\infty (G)$ onto $\S (\widehat{G})$. The inverse map is given as
\begin{equation}\label{eq:inverseFourier}
(\F^{-1} \phi) (g) = \sum _{\pi\in\widehat{G}} \dim (\pi)\, \tr (\phi (\pi)\pi (g^{-1})) \qquad (\phi \in\S (\widehat{G}),\, g \in G),
\end{equation}
where the series converges absolutely and uniformly on $G$.
\end{theorem}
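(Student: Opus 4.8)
The plan is to deduce Theorem~\ref{thm:sugiura} from the Peter--Weyl theorem and the highest weight parametrisation of $\widehat G$ recalled above, the decisive extra ingredient being a central ``Laplacian'' whose infinitesimal characters are two-sidedly comparable to $(1+|\lambda|)^2$. Concretely, I would fix an $\mathrm{Ad}(G)$-invariant inner product on $\mathfrak g$, let $\{X_i\}$ be an orthonormal basis, and put $\Omega=\sum_i X_i^2\in Z(U(\mathfrak g)_\C)$ (the associated Casimir) and $D_0=1-\Omega\in Z(U(\mathfrak g)_\C)$. Since each $d\pi(X_i)$ is skew-Hermitian for $\pi$ unitary, $d\pi_\lambda(\Omega)$ is a non-positive scalar, so $d\pi_\lambda(D_0)$ is a scalar $c_\lambda\ge 1$; from the classical expression for the Casimir eigenvalue, namely $c_\lambda=1+\langle\lambda+\rho,\lambda+\rho\rangle-\langle\rho,\rho\rangle$ for the inner product induced on $\mathfrak t_\C^\ast$ (with $\rho$ the half-sum of the positive roots, understood to vanish in the central directions), together with the equivalence of norms on $\mathfrak t_\C^\ast$, one gets $a(1+|\lambda|)^2\le c_\lambda\le b(1+|\lambda|)^2$ for constants $0<a\le b$. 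Finally $X_i^\dag=-X_i$ gives $\Omega^\dag=\Omega$, hence $D_0^\dag=D_0$.

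\textbf{Step 1: $\F$ maps $C^\infty(G)$ continuously into $\S(\widehat G)$.} A change of variables in \eqref{eq:fouriertransformdefinition}, using unimodularity of $G$, gives $\F(Df)(\pi)=\F f(\pi)\,d\pi(D^\dag)$ for $D\in U(\mathfrak g)_\C$ and $f\in C^\infty(G)$; in particular $\F(D_0^k f)(\pi_\lambda)=c_\lambda^{\,k}\,\F f(\pi_\lambda)$ by centrality of $D_0$ and $D_0^\dag=D_0$. For $f\in C^\infty(G)$ one has $D_0^k f\in C^\infty(G)\subset L^2(G)$, so Parseval's identity together with $dg$ being normalised yields
\[
\sum_\lambda \dim(\pi_\lambda)\, c_\lambda^{2k}\, \|\F f(\pi_\lambda)\|^2 = \|D_0^k f\|_2^2 \le \|D_0^k f\|_\infty^2 = p_{D_0^k}(f)^2 .
\]
Estimating a single summand and using $c_\lambda\ge a(1+|\lambda|)^2$ gives $|\lambda|^{2k}\|\F f(\pi_\lambda)\|\le a^{-k}\,p_{D_0^k}(f)$ for every $\lambda$; combined with $\|\F f(\pi_\lambda)\|\le\dim(\pi_\lambda)^{1/2}\,p_1(f)$, which handles the finitely many $\lambda$ of small norm, this gives $q_s(\F f)\le C(p_{D_0^{\lceil s/2\rceil}}(f)+p_1(f))$, so $\F f\in\S(\widehat G)$ and $\F$ is continuous.

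\textbf{Step 2: the inverse map.} Given $\phi\in\S(\widehat G)$, I would analyse the series in \eqref{eq:inverseFourier}. Since $|\tr(\phi(\pi)\pi(g^{-1}))|\le\|\phi(\pi)\|\,\|\pi(g^{-1})\|=\dim(\pi)^{1/2}\|\phi(\pi)\|$, its term at $\lambda$ is bounded by $\dim(\pi_\lambda)^{3/2}\|\phi(\pi_\lambda)\|$, uniformly in $g$; Weyl's dimension formula gives $\dim(\pi_\lambda)=O(|\lambda|^N)$, and, since $\Lambda_{\widehat G}$ lies in a lattice in $\mathfrak t_\C^\ast$, $\#\{\lambda\in\Lambda_{\widehat G}:|\lambda|\le R\}=O(R^{\dim\mathfrak t})$; combined with the rapid decay $\|\phi(\pi_\lambda)\|\le q_s(\phi)\,|\lambda|^{-s}$ these estimates make the series absolutely and uniformly convergent, so $\F^{-1}\phi$ (its sum) is continuous. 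Differentiating \eqref{eq:inverseFourier} termwise inserts a factor $d\pi_\lambda(D^\dag)$ of polynomial size in $|\lambda|$, so the same estimate shows all termwise derivatives converge uniformly; hence $\F^{-1}\phi\in C^\infty(G)$, with $D(\F^{-1}\phi)(g)=\sum_\pi\dim(\pi)\,\tr(\phi(\pi)\,d\pi(D^\dag)\pi(g^{-1}))$, and the same bound gives continuity of $\F^{-1}$. A termwise application of the Schur orthogonality relations (legitimate by the uniform convergence) gives $\F(\F^{-1}\phi)=\phi$, while injectivity of $\F$ on $C^\infty(G)$ is immediate from Peter--Weyl; thus $\F$ is a topological isomorphism of $C^\infty(G)$ onto $\S(\widehat G)$ with inverse \eqref{eq:inverseFourier}.

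\textbf{The main obstacle.} The substantive work is in the opening paragraph --- producing a central element whose infinitesimal character is genuinely comparable, from both sides, to $(1+|\lambda|)^2$ --- together with the bookkeeping in Step~2: it is precisely Weyl's dimension formula and the polynomial count of weights in a ball of $\mathfrak t_\C^\ast$ that guarantee the rapid decay built into $\S(\widehat G)$ overcomes the dimension weights $\dim(\pi)$ occurring in \eqref{eq:inverseFourier}. Once these are secured, the remaining ingredients --- Parseval's identity, termwise differentiation, and the Schur orthogonality computation --- are routine.
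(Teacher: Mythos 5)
The paper does not actually prove this theorem: it is stated as a reformulation of results from Sugiura's paper \cite{Sug} (with the convergence assertion also attributed to \cite{Pe,Ta}), so there is no in-paper argument to compare against. Your proof is correct and is, in substance, Sugiura's original argument --- the two-sided comparison of the Casimir eigenvalue with $(1+|\lambda|)^2$, Parseval for the forward direction, and Weyl's dimension formula plus a lattice-point count to beat the $\dim(\pi)^{3/2}$ weights in the inversion series --- so you have supplied exactly the proof the paper outsources.
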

The part on absolute and uniform convergence also follows from \cite{Pe,Ta}. If $G$ is a torus, then this result specializes to a well known statement from classical Fourier analysis.

After these preparations, we can now prove Theorem~\ref{thm:liegroupspectralradiusformula} in a number of steps.
Let $D\in Z(U(\mathfrak g)_\C),\,1\le p\le \infty$, and define $\widetilde T_{D,p}$ as in the introduction. If
$f\in {\mathcal D}_{\widetilde T_{D,p}}$, then $Df \in L^p(G) \subset L^1(G)$, so that $\F(D f) (\pi)$ is defined for all
$\pi \in \widehat{G}$. Since the matrix coefficients of $\pi$ are smooth, it is easily seen that
\[
\F (\widetilde T_{D,p} f) (\pi)= \chi _\pi (D^\dag )\F  f (\pi)\qquad (f\in {\mathcal D}_{\widetilde T_{D,p}}, \pi\in\widehat{G}),
\]
which, since $\chi _\pi (D^\dag )=\chi _{\bar{\pi}} (D)$, can also be written as
\[
\F (\widetilde T_{D,p} f) (\pi)= \chi _{\bar{\pi}} (D)\F  f (\pi)\qquad (f\in {\mathcal D}_{\widetilde T_{D,p}}, \pi\in\widehat{G}).
\]
\begin{lemma}\label{lem:SVEP}
If $D\in Z(U(\mathfrak g)_\C)$ and $1\le p\le \infty$, then $\widetilde T_{D,p}$ has SVEP.
\end{lemma}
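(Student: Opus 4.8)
The plan is to push the local resolvent equation onto the Fourier transform side, where $\widetilde T_{D,p}$ acts diagonally, and then invoke injectivity of the Fourier transform. So let $U\subset\C$ be non-empty and open and suppose $\phi:U\to L^p(G)$ is an analytic function with $(\widetilde T_{D,p}-z)\phi_z=0$ for all $z\in U$; the goal is to conclude that $\phi_z=0$ for every $z\in U$. By definition of the local resolvent equation this means $\phi_z\in{\mathcal D}_{\widetilde T_{D,p}}$ and $\widetilde T_{D,p}\phi_z=z\phi_z$ for all $z\in U$. Since $\phi_z$ and $\widetilde T_{D,p}\phi_z=D\phi_z$ both lie in $L^p(G)\subset L^1(G)$, I would apply the Fourier transform at an arbitrary $\pi\in\widehat{G}$ and use the intertwining relation $\F(\widetilde T_{D,p}f)(\pi)=\chi_{\bar\pi}(D)\F f(\pi)$ recorded just above to obtain
\[
\bigl(\chi_{\bar\pi}(D)-z\bigr)\,\F\phi_z(\pi)=0\qquad(z\in U,\ \pi\in\widehat{G}).
\]

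Next, fix $\pi$ and read this pointwise in $z$: whenever $z\neq\chi_{\bar\pi}(D)$ the scalar $\chi_{\bar\pi}(D)-z$ is invertible in $\End_\C(H_\pi)$, so $\F\phi_z(\pi)=0$. Thus the $\End_\C(H_\pi)$-valued function $z\mapsto\F\phi_z(\pi)$ vanishes on all of $U$ except possibly at the single point $\chi_{\bar\pi}(D)$. To upgrade this to vanishing everywhere I would observe that this function is analytic on $U$: the normalized Haar measure is a probability measure, so the inclusion $L^p(G)\hookrightarrow L^1(G)$ is bounded, and $f\mapsto\F f(\pi)$ is bounded and linear from $L^1(G)$ into the finite-dimensional space $\End_\C(H_\pi)$, so composing with the analytic map $z\mapsto\phi_z$ yields an analytic $\End_\C(H_\pi)$-valued function. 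An analytic function vanishing on a subset of $U$ having a limit point in $U$ vanishes identically, hence $\F\phi_z(\pi)=0$ for all $z\in U$.

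Finally, since $\pi\in\widehat{G}$ was arbitrary, $\F\phi_z(\pi)=0$ for all $\pi\in\widehat{G}$ and all $z\in U$, and the injectivity of the Fourier transform on $L^1(G)$ (a consequence of the Peter--Weyl theorem) gives $\phi_z=0$ for all $z\in U$. This is precisely the statement that $\widetilde T_{D,p}$ has SVEP.

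There is no deep obstacle in this argument; the only points deserving attention are the transfer of analyticity from $z\mapsto\phi_z\in L^p(G)$ to each Fourier coefficient $z\mapsto\F\phi_z(\pi)$ -- handled by the boundedness of the inclusion $L^p(G)\hookrightarrow L^1(G)$ and of $\F(\cdot)(\pi)$ -- and the elementary remark that an analytic function into $\End_\C(H_\pi)$ which vanishes off a single point must vanish identically. All the genuine work of Theorem~\ref{thm:liegroupspectralradiusformula}, namely the explicit computation of $\sigma_{\widetilde T_{D,p}}(f)$ in \eqref{eq:liegrouplocalspectrum} and the radius formula \eqref{eq:liegroupspectralradiusformula}, remains to be carried out afterwards.
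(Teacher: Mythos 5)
Your proof is correct and follows essentially the same route as the paper: Fourier transform the local eigenvalue equation, observe that $\F\phi_z(\pi)$ vanishes for all $z$ except possibly at the single point $\chi_{\bar\pi}(D)$, remove that exception by regularity of $z\mapsto\F\phi_z(\pi)$ (the paper uses mere continuity of this map, via the continuous inclusion $L^p(G)\subset L^1(G)$, where you invoke analyticity and the identity theorem), and conclude via injectivity of the Fourier transform on $L^1(G)$. The only difference is that the paper's continuity argument is slightly more economical than your appeal to analyticity, but both are valid.
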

\begin{proof}
If $U\subset\C$ is open and non-empty, and
$\phi: U\to {\mathcal D}_{\widetilde T_{D,p}}$ is analytic and such that $(\widetilde T_{D,p}-z)\phi_z=0$ for
$z\in U$, then taking Fourier transforms yields $(\chi_{\bar{\pi}} (D)-z)\F \phi_z(\pi)=0$,
for all $z\in U$ and $\pi\in\widehat{G}$. If $\pi\in\widehat{G}$ is fixed, we conclude that
$\F \phi_z(\pi)=0$ for all $z\in U$ with at most one exception, which could possibly occur at $\chi_{\bar{\pi}} (D)$ if $\chi_{\bar{\pi}} (D)\in U$.
However, since $\F \phi_z(\pi)$ depends continuously on $z$, as a consequence of the continuity of the inclusion $L^p(G) \subset L^1(G)$,
such an exception does, in fact, not occur.
Hence $\F \phi_z(\pi)=0$, for all $z\in U$ and $\pi \in \widehat{G}$, so that $\phi_z=0$ for all $z\in U$
by the injectivity of the Fourier transform on $L^1(G)$.
\end{proof}

\begin{lemma}\label{lem:localspectrum}
If $f\in C^\infty (G)$, $D\in Z(U(\mathfrak g)_\C)$ and $1\le p\le \infty$, then the local spectrum
of $\widetilde T_{D,p}$ at $f$, as an element of $D _{\widetilde T_{D,p}}$, is given by
\begin{equation*}
\sigma_{\widetilde T_{D,p}}(f)=\left\{\chi_{\bar{\pi}}(D) : \pi\in\supp \F f\right\}^\cl.
\end{equation*}
\end{lemma}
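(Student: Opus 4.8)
The plan is to prove the two inclusions separately; the only inputs are Theorem~\ref{thm:sugiura} and the intertwining relation $\F(\widetilde T_{D,p}f)(\pi)=\chi_{\bar{\pi}}(D)\,\F f(\pi)$ established above. Throughout I use that the local resolvent set $\rho_{\widetilde T_{D,p}}(f)$ is open, so that $\sigma_{\widetilde T_{D,p}}(f)$ is closed and the two sides only need to be compared up to closures. Also, since $f\in C^\infty(G)$, Theorem~\ref{thm:sugiura} gives $\F f\in\S(\widehat{G})$, and everything below works uniformly in $p$ because on the compact group $G$ one has the continuous inclusions $C^\infty(G)\hookrightarrow L^p(G)\hookrightarrow L^1(G)$.

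For $\sigma_{\widetilde T_{D,p}}(f)\subseteq\{\chi_{\bar{\pi}}(D):\pi\in\supp\F f\}^\cl$ I would take $z_0$ outside the right-hand side and exhibit a local resolvent function near $z_0$. As that set is closed, there are $\delta>0$ and an open disc $U$ about $z_0$ with $|\chi_{\bar{\pi}}(D)-z|\ge\delta$ for all $\pi\in\supp\F f$ and all $z\in U$. For $z\in U$, define an operator valued function $\psi_z$ on $\Lambda_{\widehat{G}}$ by $\psi_z(\pi)=(\chi_{\bar{\pi}}(D)-z)^{-1}\F f(\pi)$ when $\pi\in\supp\F f$, and $\psi_z(\pi)=0$ otherwise. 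Then $\|\psi_z(\pi)\|\le\delta^{-1}\|\F f(\pi)\|$ for every $\pi$, so $\psi_z\in\S(\widehat{G})$ with $q_s(\psi_z)\le\delta^{-1}q_s(\F f)$, and a routine difference-quotient estimate (uniform on slightly smaller discs, since all denominators stay bounded below by $\delta$) shows $z\mapsto\psi_z$ is analytic from $U$ into $\S(\widehat{G})$. Put $\phi_z=\F^{-1}\psi_z$. By the topological isomorphism of Theorem~\ref{thm:sugiura} the map $z\mapsto\phi_z$ is analytic into $C^\infty(G)$; since $D$ acts continuously on $C^\infty(G)$ and $C^\infty(G)\hookrightarrow L^p(G)$ continuously, it is analytic into $\mathcal D_{\widetilde T_{D,p}}$ for the graph norm, and in particular $\phi_z\in C^\infty(G)\subseteq\mathcal D_{\widetilde T_{D,p}}$. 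Finally, applying $\F$ to $(\widetilde T_{D,p}-z)\phi_z$ and using the intertwining relation yields $\F((\widetilde T_{D,p}-z)\phi_z)(\pi)=(\chi_{\bar{\pi}}(D)-z)\psi_z(\pi)=\F f(\pi)$ for every $\pi$, whence $(\widetilde T_{D,p}-z)\phi_z=f$ on $U$ by injectivity of the Fourier transform on $L^1(G)$. Thus $z_0\in\rho_{\widetilde T_{D,p}}(f)$.

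For the reverse inclusion it suffices, since $\sigma_{\widetilde T_{D,p}}(f)$ is closed, to show $\chi_{\bar{\pi}}(D)\in\sigma_{\widetilde T_{D,p}}(f)$ for every $\pi\in\supp\F f$. Fix such a $\pi$ and suppose, for contradiction, that $z_0:=\chi_{\bar{\pi}}(D)\in\rho_{\widetilde T_{D,p}}(f)$, with an analytic $\phi:U\to\mathcal D_{\widetilde T_{D,p}}$ satisfying $(\widetilde T_{D,p}-z)\phi_z=f$ on a neighborhood $U$ of $z_0$. Applying $\F$ at $\pi$ and using the intertwining relation gives $(z_0-z)\F\phi_z(\pi)=\F f(\pi)$ for all $z\in U$. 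But $z\mapsto\phi_z$ is continuous into $L^p(G)\subseteq L^1(G)$ and $g\mapsto\F g(\pi)$ is continuous on $L^1(G)$, so the left-hand side is continuous in $z$ and vanishes at $z=z_0$; hence $\F f(\pi)=0$, contradicting $\pi\in\supp\F f$.

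I do not anticipate a genuine obstacle: once Theorem~\ref{thm:sugiura} and the intertwining relation are available, the argument is essentially bookkeeping. The one step that requires care is the construction of the local resolvent in the first inclusion --- checking that $\F^{-1}\psi_z$ lands in $\mathcal D_{\widetilde T_{D,p}}$ and depends analytically on $z$ --- and what makes this painless is precisely that $z_0$ is bounded away from the \emph{whole} set $\{\chi_{\bar{\pi}}(D):\pi\in\supp\F f\}$, so the multiplier $(\chi_{\bar{\pi}}(D)-z)^{-1}$ is uniformly bounded on $\supp\F f$ and the rapid decrease of $\F f$ is preserved, with no need for any quantitative control on the growth of the infinitesimal characters $\chi_{\bar{\pi}}(D)$.
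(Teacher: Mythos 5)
Your proposal is correct and follows essentially the same route as the paper: the inclusion $\sigma_{\widetilde T_{D,p}}(f)\subseteq\{\chi_{\bar{\pi}}(D):\pi\in\supp\F f\}^\cl$ is obtained by dividing $\F f$ by the uniformly bounded-below multiplier $\chi_{\bar{\pi}}(D)-z$ in $\S(\widehat{G})$ and pulling back via Theorem~\ref{thm:sugiura}, and the reverse inclusion by evaluating the local resolvent equation on the Fourier side at $z=\chi_{\bar{\pi}}(D)$ to force $\F f(\pi)=0$. The only cosmetic difference is that your continuity argument in the second part is not needed, since $z_0=\chi_{\bar{\pi}}(D)$ lies in $U$ and one can substitute it directly, which is what the paper does.
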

\begin{proof}
We first establish that
\begin{equation}\label{eq:inclusion}
\rho_{\widetilde T_{D,p}}(f)\subset\C\setminus\left\{\chi_{\bar{\pi}}(D) : \pi\in\supp \F f\right\}^\cl.
\end{equation}
To this end, suppose that $\chi_{\bar{\pi}}(D)\in \rho_{\widetilde T_{D,p}}(f)$, for some $\pi\in\widehat{G}$.
Then there exist a neighborhood $U$ of $\chi_{\bar{\pi}}(D)$, and an analytic function $\phi:U\to {\mathcal D}_{\widetilde T_{D,p}}$
such that $(\widetilde T_{D,p} - z)\phi_z=f$, for $z\in U$.
Taking the Fourier transform at this particular $\pi$ gives
\[
(\chi_{\bar{\pi}}(D)-z)\F\phi_z(\pi)=\F f(\pi) \qquad (z\in U).
\]
Since $\chi_{\bar{\pi}}(D)$ is in $U$, we can specify $z$ at this value and conclude that $\F f(\pi)=0$ whenever
$\chi_{\bar{\pi}}(D)\in \rho_{\widetilde T_{D,p}}(f)$.
In other words, if $\chi_{\bar{\,.\,}}(D): \widehat G \to \C$ denotes the function which sends $\pi\in\widehat G$ to $\chi_{\bar{\pi}}(D)$, then
\[
\chi_{\bar{\,.\,}}(D)^{-1} \left[\rho_{\widetilde T_{D,p}}(f)\right]
\subset \widehat{G}\setminus\supp \F f,
\]
hence
\[
\chi_{\bar{\,.\,}}(D)[\supp \F f] \subset \C\setminus\rho_{\widetilde T_{D,p}}(f).
\]
Since the right hand side is closed, we conclude that
\[
\left(\chi_{\bar{\,.\,}}(D)[\supp \F f]\right)^\cl \subset \C\setminus\rho_{\widetilde T_{D,p}}(f),
\]
which is equivalent to \eqref{eq:inclusion}.

Next, we show the reverse inclusion
\begin{equation}\label{eq:reverseinclusion}
\rho_{\widetilde T_{D,p}}(f)\supset\C\setminus\left\{\chi_{\bar{\pi}}(D) : \pi\in\supp \F f\right\}^\cl,
\end{equation}
which will complete the proof.
Suppose $z_0\notin \left\{\chi_{\bar{\pi}}(D) : \pi\in\supp \F f\right\}^\cl$,
and let $\varepsilon >0$ be such that $|\chi_{\bar{\pi}}(D) - z_0| > \varepsilon$, for all $\pi\in\supp \F f$.
Let $U = \{ z\in \C : |z-z_0|< \varepsilon/2\}$, so that, for $z\in U$ and $\pi\in\supp \F f$, one has
$|\chi_{\bar{\pi}}(D) - z| > \varepsilon/2$.

Define, for each $z\in U$, the function
$\psi_z:\widehat{G}\to\bigcup_{\pi \in {\widehat{G}}}\End _\C (H_\pi)$ by
\begin{equation*}
\psi_z(\pi)=
\begin{cases}
\displaystyle{\frac{\F f(\pi)}{\chi_{\bar{\pi}}(D)-z}}&\textup{if }\pi\in\supp \F f;\\
0 &\textup{if }\pi\notin\supp \F f.
\end{cases}
\end{equation*}
Obviously $\psi_z\in \S (\widehat{G})$, since $\F f\in \S (\widehat{G})$.
It is easy to verify that the map $z\mapsto\psi_z$ is an analytic function from
$U$ to $\S(\widehat{G})$, hence, as a consequence of Theorem~\ref{thm:sugiura},
the map $z\mapsto\F^{-1}\psi_z$ is an analytic function from
$U$ to $C^\infty(G)$. Composing it with the continuous inclusion of $C^\infty(G)$
in $L^p(G)$, we obtain an analytic map $\phi : U\to {\mathcal D}_{\widetilde T_{D,p}}$ defined as
$\phi_z=\F^{-1}\psi_z$, for $z\in U$.
Since $\F [(\widetilde T_{D,p} -z)\phi_z] =\F f$ by construction, we conclude that
$(\widetilde T_{D,p} -z)\phi_z = f$, for $z\in U$. Hence $z_0\in \rho_{\widetilde T_{D,p}}(f)$ as requested.
\end{proof}

The proof of Theorem~\ref{thm:liegroupspectralradiusformula} is now completed by the following result:
\begin{lemma}\label{lem:lastlemma}
If $D\in Z(U(\mathfrak g)_\C)$, $1\le p\le \infty$ and $f\in C^\infty (G)$, then in the extended positive real numbers
\begin{equation*}
\lim_{n\to \infty}\|\widetilde T_{D,p}^n f\| _p ^{1/n} =\sup\left\{|\chi_{\bar{\pi}}(D)| : \pi\in\supp \F f\right\}.
\end{equation*}
\end{lemma}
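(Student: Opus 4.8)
The plan is to prove both inequalities separately, using Theorem~\ref{thm:sugiura} to pass between $C^\infty(G)$ and $\S(\widehat{G})$ and the fact that $\F(\widetilde T_{D,p}^n f)(\pi)=\chi_{\bar\pi}(D)^n\,\F f(\pi)$, which iterates the identity established just before Lemma~\ref{lem:SVEP}. Write $R=\sup\{|\chi_{\bar\pi}(D)|:\pi\in\supp\F f\}$. If $f=0$ both sides are $0$ (with the convention $\sup\emptyset=0$), so assume $f\neq 0$.

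For the lower bound $\liminf_n\|\widetilde T_{D,p}^n f\|_p^{1/n}\ge R$, I would first reduce from $L^p$ to $L^1$: since $G$ has finite measure, $\|\cdot\|_1\le\|\cdot\|_p$, so it suffices to bound the $L^1$ norm from below, and on $L^1$ the Fourier transform is norm-decreasing in the sense that $\|\F h(\pi)\|\le\|h\|_1$ for every $\pi$ (here $\|\cdot\|$ is the operator norm; bounding the Hilbert--Schmidt norm used in the paper only costs a $\dim\pi$ factor, which is irrelevant after taking $n$-th roots). Hence for each fixed $\pi\in\supp\F f$,
\[
\|\widetilde T_{D,p}^n f\|_p\ge\|\widetilde T_{D,p}^n f\|_1\ge\|\F(\widetilde T_{D,p}^n f)(\pi)\|=|\chi_{\bar\pi}(D)|^n\,\|\F f(\pi)\|,
\]
and since $\pi\in\supp\F f$ forces $\F f(\pi)\neq 0$, taking $n$-th roots and letting $n\to\infty$ gives $\liminf_n\|\widetilde T_{D,p}^n f\|_p^{1/n}\ge|\chi_{\bar\pi}(D)|$. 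Taking the supremum over $\pi\in\supp\F f$ yields $\ge R$. If $R=\infty$ this already shows the limit is $+\infty$ and we are done; otherwise we proceed to the upper bound.

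For the upper bound $\limsup_n\|\widetilde T_{D,p}^n f\|_p^{1/n}\le R$ (now $R<\infty$), I would work on the Fourier side. Since $f\in C^\infty(G)$, $\F f\in\S(\widehat G)$, so $q_s(\F f)=\sup_\lambda|\lambda|^s\|\F f(\lambda)\|<\infty$ for all $s$. Now $\F(\widetilde T_{D,p}^n f)(\pi)=\chi_{\bar\pi}(D)^n\F f(\pi)$; the point is that $|\chi_{\bar\pi}(D)|$ grows at most polynomially in $|\lambda|$ for $\pi=\pi_\lambda$ (since $\chi_\lambda(D)$ is a polynomial in $\lambda$ via Harish-Chandra, or more elementarily since $\F(Df)(\pi)=\chi_{\bar\pi}(D)\F f(\pi)$ and $\|\F(Df)(\pi)\|\le\|Df\|_1$ is bounded while $\|\F f(\pi)\|$ decays faster than any polynomial), say $|\chi_{\bar\pi}(D)|\le C(1+|\lambda|)^m$. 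Split $\Lambda_{\widehat G}=A\sqcup B$ where $A=\{\lambda:|\chi_{\bar{\pi_\lambda}}(D)|\le R+1\}$ minus a compact piece and $B$ its complement; more cleanly, fix $\delta>0$ and estimate
\[
\|\widetilde T_{D,p}^n f\|_p\le\|\widetilde T_{D,p}^n f\|_\infty\le\sum_{\pi\in\widehat G}\dim(\pi)\,\|\chi_{\bar\pi}(D)^n\F f(\pi)\|
\]
using the absolutely convergent inversion formula \eqref{eq:inverseFourier}. On the finite set of $\pi\in\supp\F f$ with $|\chi_{\bar\pi}(D)|>R$ — wait, there are none — so in fact every $\pi\in\supp\F f$ has $|\chi_{\bar\pi}(D)|\le R$, and for $\pi\notin\supp\F f$ the term vanishes. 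Thus
\[
\sum_{\pi\in\widehat G}\dim(\pi)\,|\chi_{\bar\pi}(D)|^n\|\F f(\pi)\|\le R^n\sum_{\pi\in\supp\F f}\dim(\pi)\,\|\F f(\pi)\|\cdot\Big(\tfrac{|\chi_{\bar\pi}(D)|}{R}\Big)^n\le R^n\,C',
\]
where $C'=\sum_{\pi\in\widehat G}\dim(\pi)\|\F f(\pi)\|<\infty$ by rapid decrease (and $(|\chi_{\bar\pi}(D)|/R)^n\le 1$). Hence $\|\widetilde T_{D,p}^n f\|_p^{1/n}\le R\,(C')^{1/n}\to R$, giving $\limsup\le R$.

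Combining the two bounds shows the limit exists and equals $R$, which is the claim; together with Lemma~\ref{lem:localspectrum} this also identifies $R=\sup\{|z|:z\in\sigma_{\widetilde T_{D,p}}(f)\}$, completing Theorem~\ref{thm:liegroupspectralradiusformula}. The only genuinely delicate point is the interchange, in the upper bound, of the limit with the infinite Fourier sum: this is handled cleanly by dominated convergence for series, using $\sum_\pi\dim(\pi)\|\F f(\pi)\|<\infty$ from $\F f\in\S(\widehat G)$ together with the uniform bound $(|\chi_{\bar\pi}(D)|/R)^n\le 1$ on the support, so no polynomial growth estimate for $\chi_{\bar\pi}(D)$ is actually needed once one notices that the supremum $R$ is attained as a bound on all of $\supp\F f$. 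The lower bound is essentially immediate once one remembers $\|\cdot\|_1\le\|\cdot\|_p$ on a probability space and that the Fourier coefficient at a point in the support is nonzero.
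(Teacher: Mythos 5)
Your proof is correct and follows essentially the same route as the paper: the lower bound by evaluating the Fourier transform at a fixed $\pi\in\supp\F f$ and using $\|\F h(\pi)\|\le \dim(\pi)^{1/2}\|h\|_1\le \dim(\pi)^{1/2}\|h\|_p$, and the upper bound by pulling the factor $\sup\{|\chi_{\bar\pi}(D)|:\pi\in\supp\F f\}^n$ out of the absolutely and uniformly convergent inversion series, exactly as in the paper. The only cosmetic slip is that $|\tr(A\,\pi(g^{-1}))|\le\dim(\pi)^{1/2}\|A\|$ in the Hilbert--Schmidt norm, so your constant $C'$ should carry an extra factor $\dim(\pi)^{1/2}$ inside the sum; this is harmless since the sum still converges (it is precisely the uniform bound $M$ that the paper extracts from Theorem~\ref{thm:sugiura}).
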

\begin{proof}
Suppose $\pi\in\supp \F f$. Then
\begin{align*}
\left |\chi_{\bar{\pi}}(D)\right |^n\|\F f(\pi)\|&=\|\F (\widetilde T_{D,p}^n f)(\pi)\|\\
&\le \int _{G} |\widetilde T_{D,p}^n f (g)| \|\pi(g)\| dg\\
&= \dim (\pi)^{1/2}\,\|\widetilde T_{D,p}^n f (g) \|_1\\
&\le  \dim (\pi)^{1/2}\,\|\widetilde T_{D,p}^n f (g) \|_p.
\end{align*}
Since $\|\F f(\pi)\|\ne 0$, we conclude that
\begin{equation*}
|\chi_{\bar{\pi}}(D) | \le \liminf _{n \to \infty}\|\widetilde T_{D,p}^n f (g) \|_p^{1/n},
\end{equation*}
hence
\begin{equation*}
\sup\{|\chi_{\bar{\pi}}(D)| : \pi\in\supp \F f\} \le \liminf _{n \to \infty}\|\widetilde T_{D,p}^n f  \|_p.
\end{equation*}

We will now proceed to show that
\begin{equation}\label{eq:ineq}
\limsup _{n \to \infty}\|\widetilde T_{D,p}^n f  \|_p \le \sup\left\{|\chi_{\bar{\pi}}(D)| : \pi\in\supp \F f\right\},
\end{equation}
which will conclude the proof of the lemma. We may assume that the right hand side is finite. Let $g\in G$, then
\begin{align*}
\widetilde T_{D,p}^n f (g) &= (\F^{-1}\F \widetilde T_{D,p}^n f ) (g)\\
&=\sum _{\pi \in \widehat{G}}\dim (\pi)\, \tr [\F \widetilde T_{D,p}^n f(\pi) \pi(g^{-1})]\\
&=\sum _{\pi \in \widehat{G}}\dim (\pi)\, \chi_{\bar{\pi}}(D)^n \, \tr [\F f(\pi) \pi(g^{-1})].
\end{align*}
Hence
\begin{equation*}
|\widetilde T_{D,p}^n f (g) |\le \sup \left\{|\chi_{\bar{\pi}}(D)^n|:\pi\in\supp \F f\right\} \cdot
\sum _{\pi \in \widehat{G}}\dim (\pi)\, |\tr [\F (\pi) \pi(g^{-1})]|.
\end{equation*}
By Theorem~\ref{thm:sugiura}, the series is bounded by a constant $M$, uniformly in $g$, so that
\begin{equation*}
\|\widetilde T_{D,p}^n f \|_p\le M\, [\sup
\left\{|\chi_{\bar{\pi}}(D)|:\pi\in\supp \F f\right\}]^n ,
\end{equation*}
and \eqref{eq:ineq} follows.
\end{proof}

\end{document}